\newcommand{\R}{\mathbb{R}}
\newcommand{\D}{\mathbb{D}}
\newcommand{\C}{\mathbb{C}}
\renewcommand{\H}{\mathcal{H}}
\renewcommand{\S}{\mathcal{S}}
\newcommand{\N}{\mathbb{N}}
\numberwithin{equation}{section}
\newcommand{\abs}[1]{\lvert#1\rvert}
\def\th{\theta}
      \def \ov{\overline}         \def\beq{\begin{equation}} \def\beqq{\begin{equation*}}
\def\eeq{\end{equation}}    \def\eeqq{\end{equation*}} \def\bprof{\begin{proof}}    \def\eprof{\end{proof}}  \def\bald{\begin{aligned}}    \def\eald{\end{aligned}}
         \def\bthm{\begin{thm}}
\def\ethm{\end{thm}}   \def\p{\partial}    
\theoremstyle{cupthm}
\newtheorem{thm}{Theorem}[section]
\newtheorem{cor}[thm]{Corollary}
\newtheorem{lem}[thm]{Lemma}
\theoremstyle{cupdefn}
\theoremstyle{cuprem}
\newtheorem{rmk}[thm]{Remark}
\numberwithin{equation}{section}
\newenvironment{customthm}[1]
  {\innercustomthm}
  {\endinnercustomthm}
\begin{document}
\title{Note on the second derivative of bounded analytic functions}
\author{Gangqiang Chen}
\address{School of Mathematics and Computer Sciences,
Nanchang University, Nanchang 330031, China}
\address{Graduate School of Information Sciences, Tohoku University, Sendai 980-8579, Japan}
\email{cgqmath@qq.com; chenmath@ncu.edu.cn}
\date{\today}



\begin{abstract}
Assume  $z_0$ lies in the open unit disk $\mathbb{D}$ and  $g$ is an analytic self-map of $\mathbb{D}$. We will determine the region of values of $g''(z_0)$ in terms of $z_0$, $g(z_0)$ and the hyperbolic derivative of $g$ at $z_0$, and give the form of all the extremal functions. In particular,  we obtain a smaller sharp upper bound for $|g''(z_0)|$ than Ruscheweyh's inequality for the case of the second derivative. Moreover, we use a different method to obtain Sz{\'a}sz's inequality, which provides a sharp upper bound for $|g''(z_0)|$ depending only on $|z_0|$.
\end{abstract}

\subjclass[2010]{primary 30C80; secondary 30F45}
\keywords{bounded analytic functions, Schwarz-Pick lemma, Dieudonn\'e's lemma, Peschl's invariant derivative, hyperbolic derivative. }

\maketitle
\section{Introduction}
\label{intro}
We denote by $\C$ the complex plane and define the disks $\D(c, \rho)$ and $\overline{\D}(c, \rho)$ by
$\D(c, \rho):=\left\{ \zeta \in \C : |\zeta-c|< \rho \right\}$,
and
$\overline{\D}(c, \rho):=\left\{\zeta \in \C : |\zeta-c|\le \rho \right\}$ for $c\in\C$ and $\rho>0$. The open and closed unit disks $\D(0,1)$ and $\overline{\D}(0,1)$ are denoted by $\mathbb{D}$ and $\overline{\D}$ respectively. Throughout this article,  $\H$ denotes the class of all analytic self-mappings of $\mathbb{D}$, and $\mathcal{S}$  denotes  the Schur class, i.e. the set of analytic functions from $\D$ to $\overline{\D}$.

First we recall a classical result obtained by Schwarz in 1890, which
says that if $g \in \mathcal{H}$ satisfies $g(0)=0$, then $| g(z_0)| \le |z_0|$ for any non-zero $z_0$ in $\D$ and $|g'(0)|\le 1$, and equalities hold if and only if
 $ g(z)=e^{i \theta}z$ for some $\theta \in \mathbb{R}$.
Since then, more and more authors started to consider the space $\H$ and obtained many relevant extensions of Schwarz's Lemma. Rogosinski\cite{rogosinski1934} determined the variability region of $g(z_0)$ for $z_0\in\D$, $g\in \H$ with $g(0)=0$ and $|g'(0)|<1$, which says that the region of values of $g(z_0)$ is the closed disk $\overline{\D}(c,\rho)$, where
 $$c=\dfrac{z_0g'(0)(1-|z_0|^2)}{1-|z_0|^2|g'(0)|^2},\quad \rho= \dfrac{|z_0|^2(1-|g'(0)|^2)}{1-|z_0|^2|g'(0)|^2}.$$
Rogosinski's Lemma
can be considered as a
sharpened version of Schwarz's Lemma (see also \cite{duren1983univalent} and \cite{goluzin1969geometric}).

 In 1916, the well-known Schwarz-Pick Lemma was proved in \cite{Pick1916}, which states that
 $$|g'(z)|\le \frac{1-|g(z)|^2}{1-|z|^2},\quad g\in \mathcal{H}, \quad z\in \D,$$
 and equality holds if and only if $g$ is a conformal automorphism of $\D$,  i.e.,
$g(z)=e^{i\theta }(z-a)/(1-\bar a z)$, $\theta\in \R$, $a \in \D$.
If we define $T_{a}\in \text{Aut}(\D)$ by
$$T_{a}(z)=\frac{z+a}{1+\overline{a}z},\quad a,z\in \D,$$
then the
Schwarz-Pick Lemma can be shown as follows.
\begin{customthm}{A (The Schwarz-Pick Lemma)}\label{thm:g-first}
Let $z_0, \delta_0\in \D$.
Suppose that $g\in \H$,
$g(z_0)=\delta_0$.
Set
$$
 g_{\alpha}(z) = T_{\delta_0}\big(\alpha T_{-z_0}(z)\big).$$
Then the region of values of $g'(z_0)$ is the closed disk
$$
\overline{\D}(0, \frac{1-|\delta_0|^2}{1-|z_0|^2})
=\{g'_{\alpha}(z_0):\alpha\in\overline{\D} \},
$$
and $g(z)$ is the form of
$T_{\delta_0}\big(T_{-z_0}(z) g^*(z)\big)$,
where $g^*\in\S$.
Further, $g'(z_0)\in \partial\D(0, \dfrac{1-|\delta_0|^2}{1-|z_0|^2})$ if and only if
$g(z)=g_{\alpha}(z)$ for some constant $\alpha \in \partial \D$.
\end{customthm}
A consequence of the Schwarz-Pick Lemma is the sharp inequality  $|g'(z_0)|\le 1/(1-|z_0|^2)$ for $g\in\H$, $z_0\in \D$, and equality occurs only for $g(z)=e^{i \theta}(z-z_0)/(1-\overline{z}_0 z)$, $\th \in \R$. Sz\'asz \cite{szasz1920} extended this inequality and obtained a sharp upper bound for $|g''|$ (see also \cite{avkhadiev2009schwarz}).
In addition, Ruscheweyh \cite{ruscheweyh1985two} proved that, for $g\in \mathcal{H}$ and $n \in \mathbb{N}$, the approximately sharp inequality
\begin{equation}\label{f^n}
 |g^{(n)}(z)|\le \frac{n!(1-|g(z)|^2)}{(1-|z|)^n(1+|z|)},\quad z\in \D,
 \end{equation}
is valid (see also \cite{Anderson2006,DaiPan2008}).

In 1931, Dieudonn\'e \cite{dieudonne1931} explicitly described the variability region of $f'(z_0)$, for $f\in \mathcal{H}$ with $f(0)=0$, at a fixed point $z_0\in \mathbb{D}$,
which could be considered as Schwarz's Lemma for the derivative $f'$.
We show
Dieudonn\'e's Lemma as follows(see also \cite{beardon2004multi}, \cite{chen_2019}, \cite{kaptanoglu2002refine}).
\begin{customthm}{B (Dieudonn\'e's Lemma)}
Let $z_0,w_0\in \D$ with $|w_0|<|z_0|$. Suppose that $f\in\mathcal{H}$ satisfies $f(0)=0$ and $f(z_0) = w_0$.
Set $u_0=w_0/z_0$, $f_{\alpha}(z)=zT_{u_0}(\alpha T_{-z_0}(z))$. Then the region of values of $f'(z_0)$ is the closed disk
\begin{align*}
\overline{\mathbb{D}}
  \left( c_1(z_0,w_0) , \rho_1(z_0,w_0) \right)=\{f'_{\alpha}(z_0):\alpha\in \overline{\D}\},
\end{align*}
where $$c_1=c_1(z_0,w_0)=\frac{w_0}{z_0}, \qquad \rho_1=\rho_1(z_0,w_0)=\frac{|z_0|^2-|w_0|^2}{|z_0|(1-|w_0|^2)},$$
and $f(z)$ is the form of
$z T_{u_0}\left(T_{-z_0}(z) f^*(z)\right)$,
where $f^*\in \S$.
Further,
$f'(z_0) \in \partial \D \left( c_1, \rho_1 \right) $ if and only if
$f(z)=f_{\alpha}(z)$ for some constant $\alpha\in \partial \D$.
\end{customthm}
The important point  is that Dieudonn\'e's Lemma is an application of the Schwarz-Pick Lemma. To see this, let $g(z)=f(z)/z$, then $g$ is an analytic self-mapping of $\D$.  Apply the Schwarz-Pick Lemma to $g$ and note that $g'(z)=(zf'(z)-f(z))/z^2$, we can easily obtain Dieudonn\'e's Lemma.

In 1996, Mercer \cite{mercer1997sharpened} obtained a description of the variability region of $g(z)$ for $z\in\D$, $g\in \H$ with $g(0)=0$ and $g(z_0)=w_0 (z_0\ne 0)$, which says that the variability region of $g(z)$ is the closed disk
$\overline{\D}(c',\rho')$, where
$$c'=\frac{z_0 u_0(1-|T_{-z_0}(z)|^2)}{1-|u_0|^2|T_{-z_0}(z)|^2},\qquad \rho'=\frac{|z_0T_{-z_0}(z)| (1-|u_0|^2)}{1-|u_0|^2|T_{-z_0}(z)|^2},\quad \text{and}\quad u_0=\frac{w_0}{z_0}. $$
It is worth pointing out that Dieudonn\'e's Lemma and Rogosinski's Lemma are the limiting cases of Mercer's result as $z\to z_0$ and $z_0\to 0$ respectively. In recent years, more and more articles on regions of variability and Schwarz-Pick type estimates  have been written \cite{Ponnusamy2007univalent,
Ponnusamy2009satisfying,yanagihara2005,yanagihara2010families,Zhong2021}.

It is natural to determine the variability region of $g''(z_0)$ for $g\in\H$, $z_0\in\D$, and further give pointwise sharp estimates for $g''(z_0)$.
We need to recall the notion of higher-order hyperbolic derivatives for $g\in \H$, which was recently introduced by P. Rivard \cite{rivard2011higher-orderHyperbolicDerivatives} (see also \cite{Baribeau2013} and \cite{rivard2013application}).
For $ z, w\in\D,$ let $[z,\,w]$ be defined by
\begin{equation}\label{eq:dist}
[z,\,w] = \frac{z-w}{1-\overline{w}z},
\end{equation}
and let $[z,\,z]=0$ for $z\in\partial\D$
to extend the definition, then
we can define a holomorphic function $\Delta_{z_0}g$ on $\D$ by
\begin{equation}\label{eq:hdquo}
\Delta_{z_0}g(z)=
\begin{cases}
\dfrac{[g(z),\,g(z_0)]}{[z,\,z_0]} &\quad \text{for}~z\ne z_0, \\
\null & \\
\dfrac{(1-|z_0|^2)g'(z_0)}{1-|g(z_0)|^2} &\quad\text{for}~z=z_0.
\end{cases}
\end{equation}
The result of the above construction is an operator $\Delta_{z_0}$, which maps every function $g\in \H$ to the function $\Delta_{z_0} g\in \S$. Thus we will have the chain rule $\Delta_{z_0} (f\circ g)=(\Delta_{g(z_0)} f)\circ g\cdot \Delta_{z_0} g$ (cf.~ \cite{Baribeau-Rivard2009}).
Therefore, we can iterate the process and construct the hyperbolic divided difference of order $j$ of the function $g$ for distinct parameters $z_0,\cdots,z_{j-1}$ as follows:
$$
g_j(z;z_{j-1},\cdots,z_0)=(\Delta_{z_{j-1}}\circ\cdots\circ\Delta_{z_0})g(z).
$$
We can also write the definition of $g_j(z;z_{j-1},\cdots,z_0)$ recursively as
\begin{equation}
g_{j}(z;z_j,\cdots,z_0)=\frac{[g_{j-1}(z;z_{j-1},\cdots,z_0),g_{j-1}(z_j;z_{j-1},\cdots,z_0)]}{[z,z_j]}.
\end{equation}
Using a limiting process, we can define the hyperbolic divided difference of order $n$ of $g$ with parameter $z$ by
\begin{equation}\label{def:Delta-z-f-zeta}
\Delta_z^n g(\zeta)=\Delta^n g(\zeta;z,\cdots,z):=\lim_{z_{n-1}\to z}\cdots\lim_{z_0\to z}\Delta^n g(\zeta;z_{n-1},\cdots,z_0)\quad(\zeta\neq z).
\end{equation}
Then we can define the $n$-th order  hyperbolic derivative  of $g\in \H$ at the point $z\in \D$ by
$$H^n g(z):=\Delta_z^n g(z)=\Delta^n g(z;z,\cdots,z):=\lim_{\zeta\to z}\Delta^n g(\zeta;z,\cdots,z).$$
We note that the usual hyperbolic derivative coincides with the first-order hyperbolic derivative $H^{1} g$, which means
$$g^h(z):=\frac{(1-|z|^2)g'(z)}{1-|g(z)|^2}=H^{1} g(z).$$
In addition, the inequality $|H^1g(z)|\le 1$ is equivalent to Ruscheweyh's inequality \eqref{f^n} for the first derivative $g'$.
Then an interesting question arises: can we also obtain Ruscheweyh's inequality \eqref{f^n} for the second derivative $g''$?

We organize the remainder of this paper as follows.
In Sect. 2, we will  provide an explicit description of the variability region
$\{g''(z_0): g\in \mathcal{H}, g(z_0) =\delta_0, H^1(z_0)=\delta_1\}$ for given $z_0$, $\delta_0$, $\delta_1$, and give the form of all the extremal functions.
In Sect. 3, we will apply the result in Sect. 2 to
obtain a best possible upper bound for $g''(z_0)$, which is a
shaper version of Ruscheweyh's result. We also use a different method to derive
Sz\'asz's result \cite{szasz1920}, which gives a sharp upper bound for $|g''(z_0)|$ depending only on $|z_0|$.
 Our work could motivate further investigations on the variability regions and estimates of higher-order derivatives of  bounded analytic functions.

\section{Variability region of the second derivative}
We begin this section with the introduction to Peschl's invariant derivatives.
For $g:\mathbb{D}\to \mathbb{D}$ holomorphic, the so-called Peschl's invariant derivatives $D^n g(z)$ are defined by the Taylor series expansion \cite{peschl1955invariants} (see also \cite{kim2007invariant} and \cite{Schippers07}):
$$z\mapsto h(z):=\frac{g(\dfrac{z+z_0}{1+\overline{z}_0 z})-g(z_0)}{1-\overline{g(z_0)}g(\dfrac{z+z_0}{1+\overline{z}_0 z})}=\sum_{n=1}^{\infty}\frac{D^n g(z_0)}{n!}z^n,\quad z, z_0\in \D.$$
The relations $D^n g(z_0)=h^{(n)}(0)$ immediately follow from the definition
of $D^ng.$

Precise forms of $D^n g(z)$, $n=1,2$, are expressed by
\begin{align*}
D_1 g(z)&=\frac{(1-\abs{z}^2)g'(z)}{1-\abs{g(z)}^2},\\
D_2 g(z)&=\frac{(1-\abs{z}^2)^2}{1-\abs{g(z)}^2}
\Bigg[g''(z)-\frac{2\overline{z}g'(z)}{1-\abs{z}^2}
+\frac{2\overline{g(z)}g'(z)^2} {1-\abs{g(z)}^2}\Bigg].
\end{align*}

We saw above that $H^{1} g(z)=D^{1} g(z)$, and
\[H^{2} g(z)=\frac{D^{2} g(z)}{2\left(1-\left|H^{1} g(z)\right|^{2}\right)},\]
for $g\in \H$ not a Blaschke product of degree $\leq 1$.
We can rewrite the inequality $|H^{2}g(z)|\leq 1$ in terms of Peschl's
invariant derivatives, which implicitly shows an inequality for $g''(z)$.
\begin{lem}[Yamashita $\text{\cite[Theorem 2]{Yamashita1994}}$]
If $g:\mathbb{D}\to \mathbb{D}$ is holomorphic, then
\begin{equation}\label{eq:yamainequality}
|D_2g(z)|\le 2(1-|D_1g(z)|^2),\quad z\in\D,
\end{equation}
equality holds for a point $z\in\D$ if and only if
$g$ is a Blaschke product of degree at most $2.$
\end{lem}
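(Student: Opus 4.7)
The most direct route is to exploit the very definition of Peschl's invariant derivatives, rather than attack the messy explicit formula for $D_2 g$ head-on. Set $h=T_{-g(z_0)}\circ g\circ T_{z_0}$; this is the auxiliary function in the Peschl expansion, it belongs to $\mathcal H$, and it satisfies $h(0)=0$. By construction, $D_1 g(z_0)=h'(0)$ and $D_2 g(z_0)=h''(0)$, so the inequality $|D_2 g(z_0)|\le 2(1-|D_1 g(z_0)|^2)$ is equivalent to $|h''(0)|\le 2(1-|h'(0)|^2)$ for an arbitrary $h\in\mathcal H$ with $h(0)=0$.

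The plan is then to apply the Schwarz lemma: since $h(0)=0$, we can write $h(z)=z\,\psi(z)$ with $\psi\in\S$. A one-line Taylor computation gives $h'(0)=\psi(0)$ and $h''(0)=2\psi'(0)$, so the target inequality becomes $|\psi'(0)|\le 1-|\psi(0)|^2$. This is exactly the Schwarz--Pick Lemma (Theorem A) applied to $\psi$ at the origin. Thus the proof of the inequality takes essentially one observation plus one invocation of Theorem A.

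For the equality case, I would split into two subcases according to whether $\psi$ is constant or not. If $\psi$ is a nonzero constant $c$, then $\psi'(0)=0$ and equality forces $|c|=1$; then $h(z)=cz$ is a Blaschke product of degree $1$. If $\psi$ is nonconstant, the Schwarz--Pick equality clause forces $\psi$ to be a disk automorphism $\psi(z)=e^{i\theta}T_{a}(z)$, so $h(z)=e^{i\theta}z\,T_{a}(z)$ is a Blaschke product of degree $2$. In both cases $g=T_{g(z_0)}\circ h\circ T_{-z_0}$, and pre-- and post--composition with disk automorphisms preserves the property of being a finite Blaschke product of the given degree, so $g$ is a Blaschke product of degree at most $2$. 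The converse is routine: if $g$ is such a Blaschke product, the associated $h$ is itself of the form $e^{i\theta}zT_a(z)$ (or $e^{i\theta}z$), which saturates the Schwarz--Pick bound for $\psi$.

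The only place where care is required is the bookkeeping identities $D_1 g(z_0)=h'(0)$ and $D_2 g(z_0)=h''(0)$ and the factorisation $h(z)=z\psi(z)$ with $h'(0)=\psi(0)$, $h''(0)=2\psi'(0)$; these are immediate from the power-series definition and Schwarz's Lemma, so there is no genuine obstacle. The whole argument bypasses the explicit formula for $D_2 g(z)$ in terms of $g$, $g'$, $g''$, which is the ugly part; this is the main simplification the approach offers.
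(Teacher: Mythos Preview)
Your argument is correct. Reducing via the Peschl auxiliary function $h=T_{-g(z_0)}\circ g\circ T_{z_0}$ to the inequality $|h''(0)|\le 2(1-|h'(0)|^2)$, then factoring $h(z)=z\psi(z)$ and invoking the Schwarz--Pick lemma at the origin for $\psi$, is exactly the right reduction; the computations $h'(0)=\psi(0)$, $h''(0)=2\psi'(0)$ are accurate, and your treatment of the equality cases (unimodular constant $\psi$ giving degree~$1$, automorphism $\psi$ giving degree~$2$) is clean.

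However, there is nothing to compare against: the paper does not prove this lemma. It is quoted as \cite[Theorem~2]{Yamashita1994} and used as an input to Theorem~\ref{thm:g-second}, without any argument supplied. So your proposal is not a reproduction of the paper's proof but a self-contained justification of a result the paper simply cites. That said, your route is essentially the standard one (and presumably close to Yamashita's), since the very definition of $D_n g$ via the Taylor expansion of $h$ is designed to make this reduction immediate.

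One small wording point: in the constant case you write ``if $\psi$ is a nonzero constant $c$''; it would be slightly cleaner to note that if $\psi\equiv c$ with $|c|<1$ then equality fails (since $0<1-|c|^2$), so equality in the constant case forces $|c|=1$. You effectively do this, but the phrase ``nonzero'' is a red herring---the relevant dichotomy is $|c|=1$ versus $|c|<1$.
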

We remark that a function $B(z)$ is called a Blaschke product of degree $n \in \N$  if it takes the form
         $$ B(z)=e^{i \theta}\prod\limits_{j=1}^{n}
         \frac{z-z_j}{1-\overline{z_j}z}, \quad z, z_j\in \D, \theta \in \mathbb{R}.$$

We can now state our first main result on the variability region of the second derivative of bounded analytic functions.
Before that we denote
$c_2$ and $\rho_2$ by
\begin{equation}
\left\{
\begin{aligned}
c_2=c_2(z_0,\delta_0,\delta_1)&= \frac{2(1-|\delta_0|^2)}{(1-|z_0|^2)^2}
(\overline{z_0}-\overline{\delta_0}\delta_{1})\delta_{1};\\
\rho_2=\rho_2(z_0,\delta_0,\delta_1)&=\frac{2(1-|\delta_0|^2)}{(1-|z_0|^2)^2}(1-|\delta_1|^2).
\end{aligned}
\right.
\end{equation}

\begin{thm}\label{thm:g-second}
Let $z_0, \delta_0\in \D$, and $ \delta_1 \in \ov{\D}$.
Suppose that $g\in \H$,
$g(z_0)=\delta_0$ and  $H^1 g(z_0)=\delta_1$.
Set
\begin{align*}
 &g_{\delta_1}(z) = T_{\delta_0}\left(\delta_1 T_{-z_0}(z)\right),\\
& g_{\delta_1,\alpha}(z)=T_{\delta_0}\left(T_{-z_0}(z) T_{\delta_1}(\alpha T_{-z_0}(z))\right).
\end{align*}
\begin{enumerate}
\item If $|\delta_1|=1$, then $g''(z_0)=c_2$ and
    $g(z)=g_{\delta_1} (z)$.
\item If $|\delta_1|<1$, then the region of values of $g''(z_0)$ is the closed disk
$$
\overline{\D}(c_2, \rho_2)
=\{g''_{\delta_1,\alpha}(z_0):\alpha\in\overline{\D} \},
$$
and $g(z)$ is the form of
$T_{\delta_0}\left(T_{-z_0}(z) T_{\delta_1}(T_{-z_0}(z)g^*(z))\right)$,
where $g^*\in\S$.
Further, $g''(z_0)\in \partial\D(c_2, \rho_2)$ if and only if
$g(z)=g_{\delta_1,\alpha}(z)$ for some constant $\alpha \in \partial \D$.
In particular, equality in $|g''(z_0)|\le |c_2|+\rho_2$ holds if and only if $\delta_1=0$ or $g(z)=g_{\delta_1,\alpha}(z)$, where $\alpha=e^{i \theta}$, $\theta=\arg{(\overline{z_0}\delta_{1}-\overline{\delta_0}\delta_{1}^2)}
$.
\end{enumerate}
\end{thm}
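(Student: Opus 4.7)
The plan is to reduce the theorem to two nested applications of Theorem A plus one second-derivative computation. Since $g \in \H$ satisfies $g(z_0) = \delta_0$, Theorem A gives $g^{\ast} \in \S$ with $g(z) = T_{\delta_0}\!\bigl(T_{-z_0}(z)\,g^{\ast}(z)\bigr)$, and a one-line differentiation using $T_{-z_0}(z_0) = 0$, $T_{-z_0}'(z_0) = 1/(1-|z_0|^{2})$, and $T_{\delta_0}'(0) = 1-|\delta_0|^{2}$ yields $g'(z_0) = (1-|\delta_0|^{2})g^{\ast}(z_0)/(1-|z_0|^{2})$; the hypothesis $H^{1} g(z_0) = \delta_1$ is therefore equivalent to $g^{\ast}(z_0) = \delta_1$. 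Part (1) is then immediate: if $|\delta_1| = 1$ the maximum modulus principle forces $g^{\ast} \equiv \delta_1$, hence $g = g_{\delta_1}$, and the general formula for $g''(z_0)$ derived below specializes to $c_2$. For part (2), with $|\delta_1| < 1$, I would apply Theorem A a second time to $g^{\ast}$ at $z_0$ to obtain $g^{\ast\ast} \in \S$ with $g^{\ast}(z) = T_{\delta_1}\!\bigl(T_{-z_0}(z)\,g^{\ast\ast}(z)\bigr)$; substituting back produces the representation of $g$ claimed in the theorem.

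The key step is to express $g''(z_0)$ as an affine function of $(g^{\ast})'(z_0)$. Writing $g = T_{\delta_0} \circ (uv)$ with $u = T_{-z_0}$ and $v = g^{\ast}$, the chain and Leibniz rules give
\[
g''(z_0) = T_{\delta_0}''(0)\,\bigl[u'(z_0)v(z_0)\bigr]^{2} + T_{\delta_0}'(0)\,\bigl[u''(z_0)v(z_0) + 2u'(z_0)v'(z_0)\bigr].
\]
Substituting $u''(z_0) = 2\overline{z_0}/(1-|z_0|^{2})^{2}$, $T_{\delta_0}''(0) = -2\overline{\delta_0}(1-|\delta_0|^{2})$, and $v(z_0) = \delta_1$ collapses this to
\[
g''(z_0) = c_2 + \frac{2(1-|\delta_0|^{2})}{1-|z_0|^{2}}\,(g^{\ast})'(z_0).
\]
Theorem A applied to $g^{\ast}$ says $(g^{\ast})'(z_0)$ sweeps exactly $\overline{\D}\bigl(0,(1-|\delta_1|^{2})/(1-|z_0|^{2})\bigr)$, parametrized by $\alpha \in \overline{\D}$, with boundary values attained iff $g^{\ast}(z) = T_{\delta_1}(\alpha T_{-z_0}(z))$ for some $\alpha \in \partial\D$. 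A linear rescaling then delivers the variability disk $\overline{\D}(c_2,\rho_2)$ and identifies the boundary extremals as $g = g_{\delta_1,\alpha}$.

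The final assertion is a triangle-inequality argument applied to $g''(z_0) = c_2 + \alpha\rho_2$: equality in $|g''(z_0)| \le |c_2| + \rho_2$ forces $|\alpha| = 1$ and $\alpha\rho_2$ to be a nonnegative real multiple of $c_2$; when $c_2 \ne 0$ this pins $\alpha = c_2/|c_2| = e^{i\arg(\overline{z_0}\delta_1 - \overline{\delta_0}\delta_1^{2})}$, while $c_2 = 0$ occurs in particular when $\delta_1 = 0$ (in which case any boundary $\alpha$ realizes equality). I expect the main obstacle to be the clean bookkeeping of the second-derivative chain rule, specifically verifying that the $-\overline{\delta_0}\delta_1^{2}$ contribution from $T_{\delta_0}''(0)$ combines with the $\overline{z_0}\delta_1$ contribution from $u''(z_0)$ to reassemble exactly the factor $(\overline{z_0}-\overline{\delta_0}\delta_1)\delta_1$ that appears in $c_2$.
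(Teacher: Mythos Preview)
Your proposal is correct and follows essentially the same route as the paper. Both arguments build the nested representation $g=T_{\delta_0}\bigl(T_{-z_0}\cdot T_{\delta_1}(T_{-z_0}\cdot g^{\ast\ast})\bigr)$ via two Schwarz--Pick steps, differentiate to obtain $g''(z_0)=c_2+\text{(positive constant)}\cdot(g^{\ast})'(z_0)$, and then read off the disk and the boundary extremals. The one organisational difference is that the paper first quotes Yamashita's inequality $|D_2 g|\le 2(1-|D_1 g|^2)$ to obtain the containment $g''(z_0)\in\overline{\D}(c_2,\rho_2)$ and only afterwards computes the representation, whereas you get both the containment and the surjectivity in one stroke by applying Theorem~A to $g^{\ast}$ and transporting its variability disk $\overline{\D}\bigl(0,(1-|\delta_1|^2)/(1-|z_0|^2)\bigr)$ through the affine map; this makes your argument a bit more self-contained, at the cost of doing the second-derivative bookkeeping explicitly rather than hiding it inside the cited lemma.
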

\begin{proof}
Note that \eqref{eq:yamainequality} is equivalent to
\begin{equation}\label{eq:g''}
|g''(z_0)-c_2|\le |\rho_2|,
\end{equation}
and equality holds here if and only if $g$ is a Blaschke product of degree 1 or 2 satisfying $g(z_0)=\delta_0$ and $H^1g(z_0)=\delta_1$.
Case (1) follows from the same method as in the proof of \cite[Lemma 2.2]{chen_2019}.
We just need to prove Case (2).

From the relation between $H^1 g$ and $g'$, we obtain
\begin{equation}\label{eq:g-derivatives-condition}
g'(z_0)=\frac{1-|\delta_0|^2}{1-|z_0|^2}\delta_1.
\end{equation}
Inequality \eqref{eq:g''} shows that $g''(z_0)\in\overline{\D}(c_2, \rho_2)$.
Next, we show the closed disk $\overline{\D}(c_2,\rho_2)$ is covered.
Let $g(z)=g_{\delta_1,\alpha}(z)$, $\alpha\in\overline{\D}$, so that $g(z_0)=\delta_0$.
Direct calculations give  \eqref{eq:g-derivatives-condition} and
$g''(z_0)=c_2+\rho_2\alpha$. The closed disk $\overline{\D}(c_2,
\rho_2)$ is covered since $\alpha \in \overline{\D}$ is arbitrary.

Next we determine the form of $g(z)$. We will do so by constructing explicitly the function $g$ as follows. Let $g_0:=g$, define the functions $g_1,g_2$ by
$$g_{j+1} (z) = \frac{[g_{j} (z),g_{j}(z_0)]}
    {[z,z_0]},\quad j=0,1,$$
where $g_2$ is analytic in $\D$ with $|g_2(z)|\le 1$ for $z\in \D$. From the construction we know that $g_1(z_0)=H^{1}g(z_0)=\delta_1$ and $g_j(z)=T_{\delta_j}(T_{-z_0}(z) g_{j+1} (z))$
 for $j=0,1$. Therefore, we have
 \begin{equation}\label{eq:g}
g(z)=T_{\delta_0}\big(T_{-z_0}(z) T_{\delta_1}( T_{-z_0}(z)g_2(z))\big),
 \end{equation}
where $g_2$ belongs to $\S$.

We know that $g''(z_0)\in\p\D(c_2, \rho_2)$ if and only if $g$ is a Blaschke product of degree 2 satisfying $g(z_0)=\delta_0$ and $H^1g(z_0)=\delta_1$. Since $g$ is the form of \eqref{eq:g}, differentiate twice both sides of \eqref{eq:g} and then substitute $z = z_0$, we obtain
$g''(z_0)=c_2+\rho_2 g_2(z_0)$. Thus $g''(z_0)\in\p\D(c_2, \rho_2)$ leads to $g_2(z_0)=\alpha\in\p\D$. It follows that
$g_2(z)=\alpha\in\p\D$ and
$$g(z)=T_{\delta_0}\big(T_{-z_0}(z) T_{\delta_1}(\alpha T_{-z_0}(z))\big), \quad \alpha\in \p\D.$$

We continue to prove the necessary and sufficient condition for equality in $|g''(z_0)|\le |c_2|+\rho_2$ to hold.
For $0<|\delta_1|<1$, we already know that equality in $|g''(z_0)-c_2|\le \rho_2$ holds if and only if $g(z)=g_{\delta_1,\alpha}(z)$, $\alpha\in \partial{\D}$. By basic geometry, equality in $|g''(z_0)|\le |c_2|+\rho_2$ holds if and only if $g''(z_0)\in \partial \D(c_2, \rho_2)$ and $\arg{g''(z_0)}=\arg{c_2}$, or if and only if $g''(z_0)=t c_2$, where $t=1+\rho_2/|c_2|$.

If $g''(z_0)=t c_2$ for $t=1+\rho_2/|c_2|$, then $g''(z_0)=c_2+c_2\rho_2/
|c_2|\in \partial \D(c_2, \rho_2)$, thus $g(z)=g_{\delta_1,\alpha}(z)$, $\alpha\in \partial{\D}$.
We remain to determine the precise value of $\alpha$.
A calculation shows that $g''(z_0)=c_2+\rho_2\alpha$. By comparison, we can conclude that $$\alpha=\frac{c_2}{|c_2|}=\dfrac{(\overline{z_0}-\overline{\delta_0}\delta_{1})\delta_{1}}
{|\overline{z_0}-\overline{\delta_0}\delta_{1}||\delta_{1}|}.$$

Conversely, if $g(z)=g_{\delta_1,\alpha}(z)$, $\alpha=c_2/|c_2|$, then a simple direct calculation yields
$g''(z_0)=tc_2$ for $t=1+\rho_2/|c_2|$.
\end{proof}
Based on Theorem \ref{thm:g-second}, we can obtain the second-order Dieudonn\'e's Lemma as its corollary \cite{chen_2019}(see also \cite{cho2012multi}). Denote $c'_2(z_0,w_0,\delta_1)$ and $\rho'_2(z_0,w_0,\delta_1)$ by
\begin{equation*}
\left\{
\begin{aligned}
c'_2=c'_2(z_0,w_0,\delta_1)&=\frac{2(r^2-s^2)}{r^2(1-r^2)^2}
\delta_1(1-\frac{z_0\overline{w_0}}{\overline{z_0}}\delta_1),\\ \rho'_2=\rho'_2(z_0,w_0,\delta_1)&=\frac{2(r^2-s^2)}{r(1-r^2)^2}(1-|\delta_1|^2).
\end{aligned}
\right.
\end{equation*}

\begin{cor}[The Second-Order Dieudonn\'e's Lemma]
Let $z_0, w_0\in \mathbb{D}$, $\delta_1\in \ov{\D}$ with $|w_0|=s<r=|z_0|$,
$$w_1= c_1(z_0,w_0)+\rho_1(z_0,w_0)\dfrac{r\delta_1}{\overline{z_0}}.$$
 Suppose that $f\in\mathcal{H}$, $f(0)=0$, $f(z_0) = w_0$ and $f'(z_0)=w_1$.
Set $u_0=w_0/z_0$, and
\begin{align*}
&f_{\delta_1}(z)=z T_{u_0}(\delta_1 T_{-z_0}(z));\\
&f_{\delta_1,\alpha}(z)=z T_{u_0}\big(T_{-z_0}(z) T_{\delta_1}(\alpha T_{-z_0}(z))\big).
\end{align*}
Then
\begin{enumerate}
\item If $|\delta_1|=1$, then $f''(z_0)=c'_2$ and $f(z)=f_{\delta_1}(z)$.
\item If $|\delta_1|<1$, then the region of values of $f''(z_0)$ is the closed disk
$$\overline{\D}(c'_2, \rho'_2)
=\{f''_{\delta_1,\alpha}(z_0):\alpha\in \overline{\D}\},
$$
and $f(z)$ is the form of
$$z T_{u_0}\left(T_{-z_0}(z) T_{\delta_1}(f^*(z) T_{-z_0}(z))\right),$$
where $f^*\in\S$.
Further, $f''(z_0)\in \p\D(c'_2,\rho'_2)$ if and only if
$f(z)=f_{\delta_1,\alpha}(z)$ for some constant $\alpha\in \partial \D$.
\end{enumerate}
\end{cor}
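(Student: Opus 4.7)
My plan is to reduce this corollary to Theorem~\ref{thm:g-second} via the classical substitution $g(z):=f(z)/z$. Since $f(0)=0$, Schwarz's lemma ensures that $g$ extends holomorphically to all of $\D$ and belongs to $\H$. From the identity $f(z)=zg(z)$, differentiation at $z_0$ yields the three key relations
\[
g(z_0)=u_0,\qquad g'(z_0)=\frac{w_1-u_0}{z_0},\qquad f''(z_0)=2g'(z_0)+z_0\,g''(z_0).
\]
Substituting $w_1=c_1+\rho_1(r\delta_1/\overline{z_0})$ with $c_1=u_0$ and simplifying, a short computation with the definition~\eqref{eq:hdquo} of $H^1$ yields $H^1g(z_0)=\delta_1$. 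Therefore Theorem~\ref{thm:g-second} applies to $g$ with parameter triple $(z_0,u_0,\delta_1)$.

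I would then invoke Theorem~\ref{thm:g-second} directly. In Case (1), $|\delta_1|=1$ forces $g(z)=T_{u_0}(\delta_1 T_{-z_0}(z))$, hence $f(z)=zg(z)=f_{\delta_1}(z)$, and $g''(z_0)$ takes the unique value $c_2(z_0,u_0,\delta_1)$; the resulting $f''(z_0)=2g'(z_0)+z_0c_2$ will match $c'_2$ after simplification. In Case (2), $|\delta_1|<1$ and the variability region of $g''(z_0)$ is $\overline{\D}(c_2,\rho_2)$. Since $g''(z_0)\mapsto 2g'(z_0)+z_0g''(z_0)$ is an affine bijection, the variability region of $f''(z_0)$ is the closed disk $\overline{\D}(2g'(z_0)+z_0c_2,\,|z_0|\rho_2)$. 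The general representation $g(z)=T_{u_0}(T_{-z_0}(z)T_{\delta_1}(T_{-z_0}(z)g^*(z)))$ from Theorem~\ref{thm:g-second}(2) then translates verbatim into the claimed form of $f(z)=zg(z)$, and the extremal family $g=g_{\delta_1,\alpha}$ becomes $f=f_{\delta_1,\alpha}$.

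What remains is the algebraic verification that this image disk coincides with $\overline{\D}(c'_2,\rho'_2)$. The equality of radii
\[
|z_0|\,\rho_2(z_0,u_0,\delta_1)=\rho'_2(z_0,w_0,\delta_1)
\]
is immediate from $|z_0|=r$ and $1-|u_0|^2=(r^2-s^2)/r^2$. For the centers, placing $2g'(z_0)$ and $z_0c_2$ over the common denominator $r^2(1-r^2)^2$ and extracting the factor $\delta_1$ collapses the bracket to $(1-r^2)+(r^2-z_0\overline{u_0}\delta_1)=1-z_0\overline{u_0}\delta_1$, after which the substitution $z_0\overline{u_0}=z_0\overline{w_0}/\overline{z_0}$ recovers $c'_2$ exactly. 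The principal obstacle is this center identification: the algebra is elementary but slightly delicate, as one must carefully combine the affine shift $2g'(z_0)$ with $z_0c_2$ to reproduce the specific combination $\delta_1\bigl(1-z_0\overline{w_0}\delta_1/\overline{z_0}\bigr)$ appearing in $c'_2$.
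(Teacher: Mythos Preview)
Your proposal is correct and follows precisely the route the paper intends: the paper does not spell out a proof of this corollary but simply says it follows from Theorem~\ref{thm:g-second}, and earlier (in the discussion after Theorem~B) it explicitly notes that Dieudonn\'e's Lemma is obtained from the Schwarz--Pick Lemma via the substitution $g(z)=f(z)/z$. Your reduction, the affine transfer $f''(z_0)=2g'(z_0)+z_0\,g''(z_0)$, and the center/radius identification are exactly what is implicit in the paper's one-line remark.
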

We mention that the present author and Yanagihara \cite{chen2020} applied this consequence to precisely determine the variability region
$V(z_0,w_0)=\{f''(z_0):f\in\H,f(0)=0, f(z_0)=w_0\}$.

\section{Estimates of the second derivative}
Theorem \ref{thm:g-second} can also give a sharp upper bound for $|g''(z)|$, $g\in \H$, depending only on $|z|$ and $|g(z)|$.

\begin{thm}\label{thm2:g-second}
  Suppose that $z_0$ and $\delta_0$ are points in $\mathbb{D}$ with $|\delta_0|=R$, $|z_0|=r$. If $g\in\mathcal{H}$ satisfies $g(z_0) = \delta_0$, then

\begin{numcases}{|g''(z_0)|\le}
\dfrac{2(1-R^2)(r+R)}{(1-r^2)^2}, & $r+2R\ge2$;\label{rR2}\\
\dfrac{(1+ R)(4-4R+r^2)}{2(1-r^2)^2},    & $r+2R <2$.  \label{rR1}
\end{numcases}
Equality holds in \eqref{rR2} if and only if
$$g(z) = e^{i\theta }\frac{{z - a}}{1 - \overline a z},$$
where
$$ \theta=\arg(-\bar z_0\delta_0),\quad a=\frac{r+R}{r(1 +rR)}z_0.$$
Equality holds in \eqref{rR1} if and only if

$$g(z)=e^{i \theta} \frac{u^2+\frac{1}{2}z_0 u -\frac{R}{r^2}z_0^2}{1+\frac{1}{2}\overline{z}_0 u -\frac{R}{r^2}\overline{z}_0^2 u^2 },\quad u=\frac{z-z_0}{1-\overline{z}_0 z},\quad \theta=\arg(-\overline{z_0}^2 \delta_0),\quad z\in \D.$$
(If $z_0=0$, then $g(z)=T_{\delta_0}\big(e^{i \theta} z^2\big)$, $\theta \in \R $ \text{is arbitrary}.)
\end{thm}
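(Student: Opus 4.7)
The plan is to use Theorem \ref{thm:g-second} as the starting point and reduce the problem to a two-variable real optimization. By Theorem \ref{thm:g-second}, once $z_0$, $\delta_0=g(z_0)$, and $\delta_1=H^1 g(z_0)$ are fixed, $g''(z_0)$ lies in the closed disk $\overline{\D}(c_2,\rho_2)$, so the triangle inequality gives
\[
|g''(z_0)|\le |c_2|+\rho_2=\frac{2(1-R^2)}{(1-r^2)^2}\Bigl[\,|\overline{z_0}-\overline{\delta_0}\delta_1|\,|\delta_1|+(1-|\delta_1|^2)\Bigr].
\]
Since $\delta_1\in\overline{\D}$ is otherwise free, the sharp pointwise bound comes from maximizing the bracketed quantity over $\delta_1$.

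First I would optimize over $\arg\delta_1$ with $|\delta_1|=t$ fixed. Writing $\overline{\delta_0}\delta_1=Rt\,e^{i\psi}$, one has $|\overline{z_0}-\overline{\delta_0}\delta_1|^2=r^2-2rRt\cos(\psi-\arg z_0)+R^2t^2$, which is maximized when the cosine equals $-1$, giving $|\overline{z_0}-\overline{\delta_0}\delta_1|_{\max}=r+Rt$. This corresponds to choosing $\delta_1=-\overline{z_0}\delta_0\,t/(rR)$. The problem then reduces to maximizing
\[
\varphi(t)=(r+Rt)t+(1-t^2)=1+rt+(R-1)t^2,\qquad t\in[0,1].
\]
Since $\varphi'(t)=r+2(R-1)t$ vanishes at $t^{*}=r/(2(1-R))$, there are two regimes. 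When $r+2R\ge 2$, we have $t^{*}\ge 1$ so $\varphi$ is increasing on $[0,1]$ and $\max\varphi=\varphi(1)=r+R$, which yields \eqref{rR2}. When $r+2R<2$, we have $t^{*}\in(0,1)$ and $\max\varphi=\varphi(t^{*})=1+r^2/(4(1-R))$; multiplying by $2(1-R^2)/(1-r^2)^2$ and simplifying gives \eqref{rR1}.

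For the equality cases I would trace back through Theorem \ref{thm:g-second}. In regime \eqref{rR2}, equality forces $|\delta_1|=1$ with the phase chosen above, so case (1) of Theorem \ref{thm:g-second} puts $g$ in the one-parameter family $g(z)=T_{\delta_0}(\delta_1 T_{-z_0}(z))$; a direct computation with $\delta_1=-\overline{z_0}\delta_0/(rR)$ rewrites this Möbius map as $e^{i\theta}(z-a)/(1-\overline{a}z)$ with the stated $\theta$ and $a$, the verification reducing to checking $g(z_0)=\delta_0$. In regime \eqref{rR1}, equality requires simultaneously $|\delta_1|=t^{*}$, the phase above, and (by the last clause of Theorem \ref{thm:g-second}) $\alpha=c_2/|c_2|\in\partial\D$, so $g=g_{\delta_1,\alpha}$, a degree-two Blaschke product. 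The main obstacle is purely computational: substituting the explicit $\delta_1$ and $\alpha$ into $T_{\delta_0}(T_{-z_0}(z)\,T_{\delta_1}(\alpha T_{-z_0}(z)))$, writing everything in terms of $u=T_{-z_0}(z)$, and simplifying to the quotient displayed in the statement. I expect the coefficients $\tfrac12 z_0$ and $-R z_0^2/r^2$ to emerge after clearing denominators and factoring out a unimodular constant equal to $-\overline{z_0}^{\,2}\delta_0/(r^2 R)$, whose argument gives $\theta=\arg(-\overline{z_0}^{\,2}\delta_0)$. The degenerate case $z_0=0$ falls out of the same formula since then $T_{-z_0}(z)=z$ and $\delta_1=0$, leaving $g(z)=T_{\delta_0}(e^{i\theta}z^2)$.
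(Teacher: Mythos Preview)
Your proposal is correct and follows essentially the same route as the paper: bound $|g''(z_0)|\le|c_2|+\rho_2$ via Theorem~\ref{thm:g-second}, maximize $|\overline{z_0}-\overline{\delta_0}\delta_1|$ over $\arg\delta_1$ to reduce to the quadratic $\varphi(t)=1+rt+(R-1)t^2$ on $[0,1]$, split into the two regimes according to whether the vertex $t^*=r/(2(1-R))$ lies in $[0,1]$, and trace the equality cases back through Theorem~\ref{thm:g-second}. The only cosmetic difference is that the paper treats the degenerate cases $\delta_0=0$ and $z_0=0$ separately before invoking the phase formula $\delta_1=-\overline{z_0}\delta_0\,t/(rR)$, whereas you fold them in at the end; both are fine.
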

\begin{proof}[Proof.]

From  Theorem \ref{thm:g-second}, we have
\begin{equation}\label{eq:g-second}
   \begin{aligned}
    | g''(z_0)|&\le |c_2|+\rho_2=\frac{2(1-R^2)}{(1-r^2)^2}\big(|\delta_1||\bar z_0-\overline{\delta_0}\delta_1|+1-|\delta_1|^2)\big)\\
        &\le\frac{2(1-R^2)}{(1-r^2)^2}\big(|\delta_1|(r+R|\delta_1|)+1-|\delta_1|^2\big)\\
       &=\frac{2(1-R^2)}{(1-r^2)^2}\Psi(x),
    \end{aligned}
\end{equation}
where
$$\Psi(x)=(R-1)x^2+rx+1,\quad x=|\delta_1|\in (0,1],$$
and equality holds in the second last inequality if and only if $rR\delta_1=-\overline{z_0}\delta_0|\delta_1|$.

  If $\delta_0=0$, then $\Psi(x)$ takes its maximum at $x=r/2<1$. In this case,
\begin{equation}\label{eq:R=0}
|g''(z_0)|\le \frac{2\Psi(r/2)}{(1-r^2)^2}
=\dfrac{4+r^2}{2(1-r^2)^2},
\end{equation}
and the equality holds if and only if $g(z)=T_{-z_0}(z) T_{\delta_1}(\alpha T_{-z_0}(z))$, where $|\delta_1|=r/2$ and $\alpha \in \partial \D$ is arbitrary.

In the following proof,  we suppose that $\delta_0\neq 0$.
Observe that $\Psi(x)$ takes its maximum at $x=r/(2(1-R))$, which is less than $1$ if and only if $r+2R<2$. In this case, the sharp upper bound for $|g''(z_0)|$ is
$$\frac{2(1-R^2)}{(1-r^2)^2}\Psi(\frac{r}{2(1-R)})
=\frac{2(1-R^2)}{(1-r^2)^2}\big(1+\frac{r^2}{4(1-R)}\big)
=\dfrac{(1+ R)(4-4R+r^2)}{2(1-r^2)^2}.$$

If $z_0\neq 0$, then the sharp upper bound for $|g''(z_0)|$ is obtained if and only if $g(z)=T_{\delta_0}\left(T_{-z_0}(z) T_{\delta_1}(\alpha T_{-z_0}(z))\right)$, where $ \alpha=-\dfrac{\bar z_0 \delta_0}{z_0 R}$, and $\delta_1=-\dfrac{\bar z_0\delta_0}{2R(1-R)}$. In other words, equality holds in \eqref{rR1} if and only if the form of $g$ is
$$g(z)=e^{i \theta} \frac{u^2+\frac{1}{2}z_0 u -\frac{R}{r^2}z_0^2}{1+\frac{1}{2}\overline{z}_0 u -\frac{R}{r^2}\overline{z}_0^2 u^2 },\quad u=T_{-z_0}(z),\quad z\in \D,\quad \theta= \arg(-\overline{z_0}^2 \delta_0).$$

We remark in this case that
$$
\frac{2(1-R^2)}{(1-r^2)^2}\big(1+\frac{r^2}{4(1-R)}\big)< \frac{2(1-R^2)}{(1-r^2)^2}\big(1+\frac{r}{2}\big)
\le \frac{2(1-R^2)}{(1-r^2)^2}\big(1+r\big).
$$

If $z_0=0$, then
\begin{equation}\label{eq:g''0}
|g''(0)|\le 2(1-R^2),
\end{equation}
and
equality holds if and only if $g(z)=T_{\delta_0}\left(T_{-z_0}(z) T_{\delta_1}(\alpha T_{-z_0}(z))\right)$, where $\delta_1=0$ and $\alpha \in \partial \D$, or if and only if
 $$g(z)=T_{\delta_0}\big(\alpha z^2\big),\quad \alpha \in \partial \D.$$

For $r+2R \ge 2$, $\Psi(x)\le \Psi(1)=r+R$ in the interval $0\le x \le 1$, so that
$$ |g''(z_0)|\le \frac{2(1-R^2)\Psi(1)}{(1-r^2)^2}
  = \frac{2(1-R^2)}{(1-r^2)^2}(r+R).$$
Equality holds in the above inequality if and only if $g(z)=T_{\delta_0}(\delta_1 T_{-z_0}(z))$, where $\delta_1=-\bar{z}_0\delta_0/(rR)$. That is to say, equality holds in \eqref{rR2} if and only if $g$ is a Blaschke product of degree 1 of the following form
$$g(z) = e^{i\theta }\frac{{z - a}}{1 - \overline a z},$$
where
$$\theta=\arg(-\overline{z_0} \delta_0),\quad a=\frac{r+R}{r(1 +rR)}z_0.$$
We remark in this case that
$$\frac{2(1-R^2)}{(1-r^2)^2}(r+R)<\frac{2(1-R^2)}{(1-r^2)^2}(1+r).$$
\end{proof}
\begin{rmk}
Rivard asked a question in \cite{rivard2013application} that could we obtain Ruscheweyh's inequality \eqref{f^n} by using $|H^n g(z)|\le 1$ for $n\ge 2$ and $g\in\H$. However,
in the case of order 2, the inequality \eqref{f^n} shows that
$$|g''(z_0)|\le \frac{2(1-|g(z_0)|^2)}{(1-|z_0|)^2(1+|z_0|)},\quad g\in \H.$$
It is worth noting that Theorem \ref{thm2:g-second} offers a smaller upper bound for $|g''(z_0)|$, which essentially indicates that $|H^2 g(z)|\le 1$ cannot yield \eqref{f^n} for the case of second derivative.
\end{rmk}

From Theorem \ref{thm2:g-second}, we can obtain a sharp upper bound for the modulus of the second derivative of $g\in \H$ depending only on $|z|$, which offers a different method to Sz\'asz's original proof \cite{szasz1920}.
 \begin{cor}[\cite{szasz1920}]\label{cor:g-second}
 If $g \in \mathcal{H}$, then
 \begin{equation}\label{eq:g''z}
 |g''(z)|\le \frac{(8+|z|^2)^2 }{32(1 - |z|^2)^2}.
 \end{equation}
Equality holds in \eqref{eq:g''z} at some $z_0\in \D$ if and only if
$$g(z)=e^{i \theta} \frac{8u^2+4z_0 u -z_0^2}{8+4\overline{z}_0 u -\overline{z}_0^2 u^2 },\quad u=\frac{z-z_0}{1-\overline{z}_0 z},\quad \theta \in \R.$$
\end{cor}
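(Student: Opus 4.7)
The plan is to regard the two upper bounds in Theorem \ref{thm2:g-second} as functions of $R=|g(z_0)|$ with $r=|z_0|$ held fixed, and then maximize over $R\in[0,1)$ to eliminate the dependence on $R$. Write
\[
F_1(R)=\frac{2(1-R^2)(r+R)}{(1-r^2)^2}\quad\text{for}~r+2R\ge 2,\qquad
F_2(R)=\frac{(1+R)(4-4R+r^2)}{2(1-r^2)^2}\quad\text{for}~r+2R<2.
\]
A direct check (both expressions agree at $R=1-r/2$) shows that the piecewise bound $F(R)$ is continuous at the junction $R=1-r/2$, so I can differentiate each piece separately.

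For $F_2$, I compute $(1+R)(4-4R+r^2)=4+r^2-4R^2+r^2R$, whose derivative in $R$ vanishes precisely at $R_\ast=r^2/8$. Since $r^2/8<1-r/2$ for all $r\in[0,1)$, this critical point lies in the admissible region for $F_2$ and is a maximum. Substituting $R=r^2/8$ and using the factorization $(1+r^2/8)(4+r^2/2)=(8+r^2)^2/16$, I get
\[
F_2(r^2/8)=\frac{(8+r^2)^2}{32(1-r^2)^2},
\]
which is exactly the bound I need. For $F_1$, the derivative $1-2Rr-3R^2$ vanishes at $R^\#=(-r+\sqrt{r^2+3})/3$, and a short computation shows $R^\#<1-r/2$ throughout $[0,1)$, so $F_1$ is already strictly decreasing on the regime $r+2R\ge 2$; hence its maximum on $[1-r/2,1)$ is attained at the left endpoint, where it coincides with $F_2(1-r/2)\le F_2(r^2/8)$. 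This establishes the global inequality \eqref{eq:g''z}.

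For the equality statement I trace backwards through the extremal case of Theorem \ref{thm2:g-second}. Equality throughout the chain forces $r+2R<2$ and $R=r^2/8$, and hence forces the extremal function in case \eqref{rR1}. Substituting $R/r^2=1/8$ into the displayed extremal $g$ and clearing the factor $1/8$ from numerator and denominator yields the stated formula
\[
g(z)=e^{i\theta}\frac{8u^2+4z_0u-z_0^{\,2}}{8+4\overline{z_0}u-\overline{z_0}^{\,2}u^2},\qquad u=T_{-z_0}(z),
\]
with the prescribed $\theta$; the case $z_0=0$ falls out of the same substitution. The main obstacle is bookkeeping: verifying that the piecewise bound really is maximized in the $F_2$-regime (checking both the location of the critical point relative to $1-r/2$ and the monotonicity of $F_1$) and then performing the algebraic simplification $(1+r^2/8)(4+r^2/2)=(8+r^2)^2/16$ and the corresponding reduction of the extremal function; none of this is conceptually hard, but the cross-checks between the two cases must be handled carefully.
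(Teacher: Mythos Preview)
Your proposal is correct and follows essentially the same route as the paper: both proofs maximize the two branches of Theorem~\ref{thm2:g-second} over $R=|g(z_0)|$, locate the critical point $R=r^2/8$ of the $r+2R<2$ branch, verify that the $r+2R\ge 2$ branch is decreasing on its domain (the paper writes the relevant root as $1/(\sqrt{r^2+3}+r)$, which equals your $(-r+\sqrt{r^2+3})/3$), and then compare the two maxima. The only cosmetic difference is that the paper separates out a few edge cases ($R=0$, $r=0$) explicitly; one small point worth making explicit in your write-up is that when you specialize the extremal of \eqref{rR1} to $R=r^2/8$, the constraint $\theta=\arg(-\overline{z_0}^{\,2}\delta_0)$ together with the freedom in $\arg\delta_0$ is what makes $\theta\in\R$ arbitrary in the final statement.
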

\begin{proof}
If $g(z)=0$ and $H^1 g(z)=1$, then $g$ is a Blaschke product of degree 1 and
$$|g''(z)|\le \frac{2|z|}{(1-|z|^2)^2}.$$
If $g(z)=0$ and $H^1 g(z)<1$, then from \eqref{eq:R=0} we can obtain
$$|g''(z)|\le
\frac{4+|z|^2}{2(1-|z|^2)^2}.$$

Next we consider $g(z)\neq 0$.
For $z_0\in \D$, let $\delta_0=g(z_0)$, $R=|\delta_0|$ and $r=|z_0|$.
If $r=0$, then equality in \eqref{eq:g''z} holds if and only if $$g(z)=e^{i\theta }z^2,\quad \theta \in \R.$$
Suppose that $r\ne 0$.
From Theorem \ref{thm2:g-second}, we consider two cases for $r+2R\ge 2$ and $r+2R<2$, respectively.

\emph{Case (i).}  For $r+2R\ge 2$, we know that
  $$|g''(z_0)|\le \dfrac{2(1-R^2)(r+R)}{(1-r^2)^2}=\frac{2\varphi(R)}{( 1 - r^2 )^2},$$
where $\varphi(R)=-R^3-rR^2+r+R$ and $1-r/2\le R<1$. Let
$$\varphi'(R)=-3R^2-2rR+1=0.$$
Then we obtain two roots of the equation above,
$$R_1=-\frac{1}{\sqrt{3+r^2}-r},\quad
R_2=\frac{1}{\sqrt{3+r^2}+r}.$$
Note that $R_1<0$, while $R_2< 1-r/2$ is equivalent to $r^2+4r-8< 0$, which is always valid for $r\in (0,1)$. Thus, $\varphi(R)$ is decreasing in the interval $1-r/2\le R<1$. In this case,
\begin{equation}\label{eq:r+2R>2}
|g''(z_0)|\le \frac{2\varphi(1-\dfrac{r}{2})}{( 1 - r^2 )^2} =\frac{r(2+r)(4-r)}{4( 1 - r^2 )^2}.
\end{equation}
Equality holds if and only  $g(z)=T_{\delta_0}(\alpha T_{-z_0}(z))$, where $\alpha=-\bar{z}_0\delta_0/(rR)$, $R=1-r/2$. In other words, equality holds in \eqref{eq:r+2R>2}  if and only if
$$g(z) = e^{i\theta }\frac{z - a}{1 - \overline a z},$$ where
$$ a=\frac{2+r}{r(2+2r-r^2)}z_0,\quad\theta \in \R.$$

\emph{Case (ii).}  For $r+2R< 2$, we note that
$$|g''(z_0)|\le \frac{(1+ R)(4-4R+r^2)}{2(1-r^2)^2}=\frac{\Phi(R)}{2(1-r^2)^2},$$
where
$$\Phi(R)=-4R^2+r^2R+r^2+4.$$
This is a parabolic function with an axis of symmetry of $R=r^2/8$, which is always  less than $1-r/2$ for $r\in [0,1)$. Therefore, $\Phi(R)$ reaches its maximum at $R=r^2/8$,
\begin{equation}\label{r+2R<2}
|g''(z_0)|\le \frac{\Phi(r^2/8)}{2(1-r^2)^2}=\frac{(8 +r^2)^2}{32(1-r^2)^2}.
\end{equation}
Equality holds if and only if
$g(z)=T_{\delta_0}\big(T_{-z_0}(z) T_{\delta_1}(e^{i \theta} T_{-z_0}(z))\big)$,
where $ \theta=\arg(-\overline{z_0}^2 \delta_0)$,  $|\delta_0|=r^2/8$,  and $\delta_1=-\bar z_0\delta_0/(2R(1-R))$.
In other words,  equality holds in \eqref{r+2R<2} if and only if the form of $g$ is
$$g(z)=e^{i \theta} \frac{8u^2+4z_0 u -z_0^2}{8+4\overline{z}_0 u -\overline{z}_0^2 u^2 },\quad u=T_{-z_0}(z),\quad z\in \D,\quad \theta \in \R.$$
Noting that for $0\le r<1$,
$$\frac{r(2+r)(4-r)}{4( 1 - r^2 )^2}<\frac{(8 +r^2)^2}{32(1-r^2)^2},$$
and
$$\frac{4+r^2}{2(1-r^2)^2}\le\frac{(8 +r^2)^2}{32(1-r^2)^2},$$
we see that inequality \eqref{eq:g''z} holds.
\end{proof}


\section*{Acknowledgement}
The author would like to express his deep gratitude to Prof. Toshiyuki Sugawa for his valuable comments and instructive suggestions. 
\bibliographystyle{srtnumbered}

\end{document}